\definecolor{darkblue}{rgb}{0, 0, .4}
\definecolor{grey}{rgb}{.7, .7, .7}
\newtheorem{theorem}{Theorem}[section]
\newtheorem{lemma}[theorem]{Lemma}
\theoremstyle{definition}
\newtheorem{definition}[theorem]{Definition}
\newtheorem{example}[theorem]{Example}
\theoremstyle{remark}
\newtheorem{remark}[theorem]{Remark}
\numberwithin{equation}{section}
\theoremstyle{theorem}
\newtheorem{corollary}[theorem]{Corollary}
\newcommand{\s}[0]{\sigma}
\newcommand{\w}{\mathsf{w}}
\begin{document}

\title{An explicit derivation of the M\"obius function for Bruhat order}

\begin{abstract}
We give an explicit nonrecursive complete matching for the Hasse diagram of
the strong Bruhat order of any interval in any Coxeter group.  This yields a
new derivation of the M\"obius function, recovering a classical result due to
Verma.
\end{abstract}

\author{Brant C. Jones}
\address{Department of Mathematics, One Shields Avenue, University of California, Davis, CA 95616}
\email{\href{mailto:brant@math.ucdavis.edu}{\texttt{brant@math.ucdavis.edu}}}
\urladdr{\url{http://www.math.ucdavis.edu/\~brant/}}

\thanks{The author received support from NSF grant DMS-0636297.}

\keywords{Bruhat order, matching}

\date{\today}

\maketitle



\bigskip
\section{Introduction}\label{s:background}

The Bruhat partial order on the elements of a Coxeter group is a fundamental tool
in algebraic combinatorics, representation theory and the geometry of Schubert
varieties.  In this work, we give a derivation of the M\"obius function for this
partial order based on an explicit nonrecursive matching of the Hasse diagram.
The M\"obius function is used to invert formulas defined by sums over Bruhat
intervals, and gives the Euler characteristic in poset topology.  Many proofs
of the M\"obius function have appeared in the literature; see
\cite{verma,deodhar_mobius,k-l,b-w-82,stembridge_mobius,marietti-zircon}.  

Our construction is closest to Verma's original argument, although it is phrased
in terms of combinatorial objects called masks that are related to
Kazhdan--Lusztig combinatorics.  In \cite{verma}, Verma constructs a complete
matching of the Hasse diagram of the Bruhat interval $[x,w]$ in ``half'' the
cases:  when there exists a Coxeter generator $s_i$ such that $x s_i > x$ and $w
s_i < w$.  In the other cases, he applies an inductive argument to prove the
M\"obius function formula, but this argument does not extend to give a complete
matching of the Bruhat interval.  The complete matching that we give below can
be seen to agree with Verma's in the case that there exists $s_i$ satisfying $x
s_i > x$ and $w s_i < w$.  This case is also an example of a special matching
that has been used to compute $R$-polynomials in Kazhdan--Lusztig theory; see
\cite[Proposition 5.6.1]{b-b}.  In addition, \cite{rietsch-williams} have used
a complete matching of the intervals in finite Coxeter groups in order to apply
discrete Morse theory to totally nonnegative flag varieties.  We show that our
construction also agrees with this matching for the case of finite Coxeter
groups.  

Our matching unifies these constructions, and has the advantage of being given
explicitly and nonrecursively.  It also extends to intervals in infinite
Coxeter groups.


\bigskip
\section{Construction}\label{s:construction}

Let $W$ be a Coxeter group with generating set $S$ and relations of the form
$(s_i s_j)^{m(i,j)} = 1$.  An \em expression \em is any product of generators
from $S$ and the \em length \em $l(w)$ is the minimum length of any expression
for the element $w$.  Such a minimum length expression is called \em reduced\em.
Given $w \in W$, we represent reduced expressions for $w$ in sans serif font,
say $\w = \w_1 \w_2 \cdots \w_p$ where each $\w_i \in S$.  For any $x, w \in W$,
we say that \em $x \leq w$ in Bruhat order \em if a reduced expression for $x$
appears as a subword (that is not necessarily consecutive) of some reduced
expression for $w$.  There are several other characterizations of this partial
order on the elements of $W$; see \cite{h,b-b} for details.  If $s_i$ appears
as the last (first, respectively) factor in some reduced expression for $w$,
then we say that $s_i$ is a \em right (left, respectively) descent \em for $w$;
otherwise, $s_i$ is an \em right (left, respectively) ascent \em for $w$.  If
$s_i$ is a descent for an element $w$ with reduced expression $\w = \w_1 \w_2
\cdots \w_p$ then the \em Exchange Condition \em implies that there exists an
index $i$ for which $w s_i = \w_1 \cdots \w_{i-1} \widehat{\w_{i}} \w_{i+1}
\cdots \w_p$, where the hat indicates omission.

The following lemma gives a useful property of Bruhat order.

\begin{lemma}{\bf (Lifting Lemma) \cite[Proposition 2.2.7]{b-b}}\label{l:lifting}
Suppose $x < w$, $s_i$ is a right descent for $w$, and $s_i$ is a right ascent for $x$.
Then, $x s_i \leq w$ and $w s_i \geq x$.
\end{lemma}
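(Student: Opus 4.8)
The plan is to argue directly from the subword characterization of Bruhat order, working with a reduced expression for $w$ that ends in $s_i$. Since $s_i$ is a right descent for $w$, starting from any reduced expression $\w_1 \w_2 \cdots \w_p$ for $w$ the Exchange Condition produces an index with $w s_i = \w_1 \cdots \widehat{\w_i} \cdots \w_p$; this has length $p-1$, so $(\w_1 \cdots \widehat{\w_i} \cdots \w_p)\, s_i$ is a reduced expression for $w$ of length $p$ ending in $s_i$. I would fix such an expression and write it as $\w = \w' s_i$, where $\w' = \w_1 \cdots \w_{p-1}$ is a reduced expression for $w s_i$.

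Next I would invoke the standard strong form of the subword property (one of the several equivalent characterizations of Bruhat order mentioned above): every reduced expression for $w$ contains a reduced subword spelling $x$. Applying this to $\w$ gives a reduced subword of $\w$ equal to $x$. The key point is that such a subword cannot involve the final letter $s_i$ of $\w$, since otherwise $x$ would have a reduced expression ending in $s_i$, contradicting the hypothesis that $s_i$ is a right ascent for $x$. Hence this subword lies inside $\w'$, which immediately yields $x \le w s_i$, i.e.\ $w s_i \ge x$, the second conclusion. Appending the letter $s_i$ to this subword produces a subword of $\w$ that spells $x s_i$, and it is reduced because $l(x s_i) = l(x) + 1$, again using that $s_i$ is a right ascent for $x$. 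Therefore $x s_i \le w$, which is the first conclusion.

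I do not expect a genuine obstacle here: once the reduced expression for $w$ ending in $s_i$ is in hand, the argument is a two-line subword manipulation. The only point deserving care is that the proof invokes the strong subword property (every reduced expression of $w$ admits a reduced subword for $x$), rather than the ``some expression'' form taken as the definition in the excerpt; if a self-contained treatment were wanted, that strong form should be derived first, or else the inclusion $x s_i \le w$ could instead be obtained by induction on $l(w)$ directly from the Exchange Condition.
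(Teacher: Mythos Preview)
Your argument is correct. The paper does not actually prove this lemma; it is quoted as a known result from \cite[Proposition 2.2.7]{b-b}, so there is no in-paper proof to compare against. That said, your subword argument is essentially the standard proof one finds in the cited reference: pick a reduced expression for $w$ ending in $s_i$, observe that any reduced subword for $x$ must avoid the final letter (else $s_i$ would be a descent for $x$), and read off both conclusions. Your caveat about needing the strong form of the subword property (that \emph{every} reduced expression for $w$ contains a reduced subword for $x$, not merely some expression) is well taken and is the only nontrivial input.
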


In this work, we will represent Bruhat relations using a combinatorial model
inspired by Deodhar \cite{d} and Billey--Warrington \cite{b-w} for the purpose
of studying Kazhdan--Lusztig polynomials.  Fix a reduced expression $\w = \w_1
\w_2 \cdots \w_p$.  Define a \em mask \em $\s$ associated to the reduced expression $\w$ to be any
binary vector $(\s_1, \ldots, \s_p)$ of length $p = l(w)$.  Every mask
corresponds to a subexpression of $\w$ defined by $\w^\s =
\w_{1}^{\s_1} \cdots \w_{p}^{\s_p}$ where
\[
\w_{j}^{\s_j}  =
\begin{cases}
\w_{j}  &  \text{ if  }\s_j=1\\
\text{1}  &  \text{ if  }\s_j=0.
\end{cases}
\]
Each $\w^\s$ is a product of generators so it determines an element of $W$.  For
$1\leq j\leq p$, we also consider initial sequences of a mask denoted $\s[j] =
(\s_1, \ldots, \s_j)$, and the corresponding initial subexpression $\w^{\s[j]}
= \w_{1}^{\s_1} \cdots \w_{j}^{\s_j}$.  In particular, we have $\w^{\s[p]} =
\w^\s$.  We also use this notation to denote initial sequences of expressions,
so $\w[j] = \w_1 \cdots \w_j$.

We say that a position $j$ (for $2 \leq j \leq p$) of the fixed reduced
expression $\w$ is a \em defect \em with respect to the mask $\s$ if
\begin{equation*}
\w^{\s[j-1]} \w_{j} < \w^{\s[j-1]}.
\end{equation*}
Note that the defect status of position $j$ does not depend on the value of
$\s_j$.  
We say that a defect position is a \em 0-defect \em if it has mask-value 0, and
call it a \em 1-defect \em if it has mask-value 1.
If a mask has no defect positions at all, then we say it is a \em constant mask
on the reduced expression $\w$ for the element $\w^{\s}$\em.  This terminology
arises from the fact that these masks correspond precisely to the unique
constant term in the Kazhdan--Lusztig polynomial $P_{x,w}(q)$ in the
combinatorial model mentioned above.  Other authors
\cite{marsh-rietsch,rietsch-williams} have used the term ``positive
distinguished subexpression'' to define an equivalent notion.

The following result is due to Deodhar \cite[Proposition 2.3(iii)]{d}, and has
also appeared in work of \cite{marsh-rietsch} related to totally nonnegative
flag varieties, as well as \cite{armstrong} in the context of sorting algorithms
on Coxeter groups.  As the lemma is central to our work, we include a proof here
for completeness.

\begin{lemma}{\bf (Deodhar)}\label{l:unique_constant_mask}
Let $\w = \w_1 \cdots \w_p$ be a reduced expression for an element $w \in W$ and
let $x \leq w$.  Then there is a unique constant mask $\s$ on $\w$ for $x$.
\end{lemma}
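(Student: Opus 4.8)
The plan is to prove existence and uniqueness separately, both by induction on the length $p$ of the reduced expression $\w$. The base case $p = 0$ (so $w = 1$, hence $x = 1$) is trivial: the empty mask works and is the only one.

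For the inductive step, let $\w = \w_1 \cdots \w_p$ and set $s_i = \w_p$, which is a right descent for $w$. Let $w' = \w_1 \cdots \w_{p-1} = w s_i$, a reduced expression of length $p-1$. The key dichotomy is whether $s_i$ is a right descent or a right ascent for $x$.

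\emph{Existence.} If $s_i$ is a right ascent for $x$, then $x \leq w'$ (since $x \leq w$, $ws_i < w$, $xs_i > x$ force $x \leq ws_i = w'$ by the Lifting Lemma, Lemma~\ref{l:lifting}), so by induction there is a constant mask $\tau$ on $\w'$ for $x$; extend it by $\s_p = 0$. Position $p$ is a defect iff $\w^{\tau}\w_p < \w^{\tau}$, i.e. iff $x s_i < x$, which fails; so the extended mask has no new defect and is constant on $\w$ for $x$. If instead $s_i$ is a right descent for $x$, then $x s_i < x \leq w$, and $x s_i \leq w'$: indeed $x s_i < x \le w$ and applying the Lifting Lemma to $xs_i < w$ (with $s_i$ a right ascent for $xs_i$ and a right descent for $w$) gives $xs_i \le ws_i = w'$. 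By induction there is a constant mask $\tau$ on $\w'$ for $x s_i$; extend by $\s_p = 1$. Then $\w^\s = (\w')^\tau s_i = (xs_i)s_i = x$, and position $p$ is a defect (as $(xs_i)s_i = x < xs_i = \w^{\tau}$), but it is a $1$-defect. Wait---a $1$-defect is still a defect, so this would violate constancy. Hence in this case we must instead argue that one \emph{cannot} have $\s_p = 1$ in a constant mask, and the correct construction sets $\s_p = 0$: then $\w^\s = (\w')^\tau'$ for the appropriate $\tau'$, and we need $(\w')^{\tau'} = x$ with no defect at position $p$; but position $p$ being a defect depends only on $\w^{\s[p-1]} = (\w')^{\tau'}$, and if $(\w')^{\tau'} = x$ with $xs_i < x$ then position $p$ \emph{is} a defect regardless of $\s_p$. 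So in fact when $s_i$ is a descent for $x$, \emph{every} mask for $x$ on $\w$ has a defect at position $p$, which seems to contradict the lemma. The resolution is that the lemma as stated requires $x \le w$ but the constant mask need not have $\w^{\s[p-1]} = x$; rather we should build $\tau$ on $\w'$ to be the constant mask for $x s_i$ when $s_i \in D_R(x)$, set $\s_p = 1$, and then re-examine: position $p$ satisfies $\w^{\s[p-1]}\w_p = (xs_i)s_i = x$ versus $\w^{\s[p-1]} = xs_i$, and since $x = (xs_i)s_i$ and $s_i$ is an ascent for $xs_i$ we get $x > xs_i$, so $\w^{\s[p-1]}\w_p > \w^{\s[p-1]}$ and position $p$ is \emph{not} a defect. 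So $\s_p = 1$ works after all.

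\emph{Uniqueness.} Suppose $\s, \s'$ are both constant masks on $\w$ for $x$. Since position $p$ is non-defect under both, and its defect status depends only on $\w^{\s[p-1]}$ resp. $\w^{\s'[p-1]}$, we read off the possible behaviours: if $\s_p = 0$ then non-defectness of $p$ forces $\w^{\s[p-1]} s_i > \w^{\s[p-1]}$, and $\w^{\s[p-1]} = x$, so $s_i$ is an ascent for $x$; if $\s_p = 1$ then $\w^{\s[p-1]} s_i = x$ and non-defectness forces $\w^{\s[p-1]} > x s_i$... one must track this carefully, but the upshot is that the value of $\s_p$ is forced by whether $s_i \in D_R(x)$: in the ascent case $\s_p = 0$ and $\w^{\s[p-1]} = x$; in the descent case $\s_p = 1$ and $\w^{\s[p-1]} = x s_i$. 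Either way $\s[p-1]$ is a constant mask on $\w'$ for a determined element ($x$ or $xs_i$), so by the inductive uniqueness $\s[p-1] = \s'[p-1]$, whence $\s = \s'$.

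The main obstacle is the bookkeeping in the descent case: pinning down exactly why a constant mask must satisfy $\s_p = 1$ with $\w^{\s[p-1]} = xs_i$ (rather than $\s_p = 0$ with $\w^{\s[p-1]} = x$, which is forced to create a defect at $p$), and dually why the ascent case forces $\s_p = 0$. This is where the Lifting Lemma and the observation ``the defect status of position $j$ does not depend on $\s_j$'' do the real work, and it is worth stating as an explicit sublemma that for a constant mask, $\s_p = 1 \iff s_i \in D_R(\w^\s)$.
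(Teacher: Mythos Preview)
Your argument is correct and is essentially the same as the paper's: both construct the mask greedily from the right, setting $\s_p$ according to whether $\w_p$ is a right descent for $x$, and both invoke the Lifting Lemma to guarantee that the residual element ($x$ or $xs_i$) lies below $w' = \w_1\cdots\w_{p-1}$ so that the induction/recursion can continue; uniqueness follows in both because the non-defect condition at position $p$ forces the value of $\s_p$. The only real difference is presentational---the paper packages the induction as an explicit right-to-left algorithm and maintains the single invariant $r_i(x)\le r_i(w)$, whereas you redo the Lifting Lemma application at each step---so you should strip out the exploratory back-and-forth in the descent case (the ``Wait---'' paragraph) and state cleanly from the outset that when $s_i$ is a right descent for $x$ one takes $\s_p=1$ with $\w^{\s[p-1]}=xs_i$, noting that position $p$ is then not a defect because $(xs_i)s_i = x > xs_i$.
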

\begin{proof}
We describe a greedy algorithm to construct such a mask.  
Let $r_{p+1}(x) = x$ and $i = p$.  We inductively assign
\[ \s_i := \begin{cases}
    0 &  \text{ if $\w_i$ is a right ascent for $r_{i+1}(x)$ } \\
    1 &  \text{ if $\w_i$ is a right descent for $r_{i+1}(x)$ } \\
\end{cases} \ \ 
\]
\[
\text{ and } \ \ \ \ r_i(x) := \begin{cases}
    r_{i+1}(x) &  \text{ if $\w_i$ is a right ascent for $r_{i+1}(x)$ } \\
    r_{i+1}(x) \cdot \w_i &  \text{ if $\w_i$ is a right descent for $r_{i+1}(x)$ } \\
\end{cases} 
\]
for each $i$ from $p$ down to $1$.  

Note that the constraint that $\s$ have no defects forces the choice of
mask-value at each step.  Hence, there can be at most one mask $\s$ on $\w$ for
$x$.  In particular, the algorithm produces a constant mask on $\w$ for $x$ if
and only if $r_1(x)$ is the identity.

We claim that $r_1(x)$ is always the identity.  Note that we have a constant
mask consisting of all 1 entries for $x = w$.  Hence, if $x < w$ and we run the
algorithm for both elements simultaneously, we initially have $r_{p+1}(x) = x
\leq w = r_{p+1}(w)$.  Observe that for each $i \leq p$, whenever we have
$r_{i+1}(x) \leq r_{i+1}(w)$ then $r_{i}(x) \leq r_{i}(w)$.  This follows by
definition when $r_{i}(w) = r_{i+1}(w)$, and by an application of the Lifting
Lemma~\ref{l:lifting} in the case that $r_{i+1}(w)$ covers $r_{i}(w)$ using the
fact that $r_i(x)$ always has $\w_i$ as a right ascent by construction.  Since
$r_1(w) = 1$, this implies by induction that $r_1(x) = 1$ so the algorithm
produces a constant mask for all $x<w$.
\end{proof}

\begin{example}
If $W = A_4$, $\w = s_2 s_3 s_4 s_1 s_2 s_3$ and $x = s_1 s_2 s_1$ then $\s$ is
\[
\begin{tabular}{cccccc}
    $s_2$ & $s_3$ & $s_4$ & $s_1$ & $s_2$ & $s_3$ \\
    1 & 0 & 0 & 1 & 1 & 0 \\
\end{tabular}
\]
as a result of 
\[ r_7(x) = s_1 s_2 s_1 = r_6(x), r_5(x) = s_2 s_1, r_4(x) = s_2 = r_3(x) = r_2(x), r_1(x) = 1. \]
\end{example}

\begin{remark}
Suppose $\s$ and $\tau$ are constant masks on a fixed reduced expression $\w$.
Then it can be shown that the mask $\nu = \s \vee \tau$ defined by
\[ \nu_i := \begin{cases}
    1 &  \text{ if $\s_i = 1$ or $\tau_i = 1$, } \\
    0 &  \text{ otherwise. }
\end{cases} \]
is a constant mask.  Although Bruhat order is not a lattice, the operation
$\vee$ can be used to define an associated join-semilattice that respects
Bruhat order, once we fix a reduced expression $\w$.

In fact, \cite{armstrong} has shown that this partial order on $[1, \w]$ is a
lattice that lies maximally between the weak and strong Bruhat orders on $W$.
\end{remark}

Continue to fix the reduced expression $\w = \w_1 \cdots \w_p$ for $w \in W$ and
suppose $y \leq x \leq w$.  We describe a notion of relative mask that captures
this pair of Bruhat relations.  Let $\tau$ be the unique constant mask on $\w$
for $x$.  Then, $\w^{\tau}$ is a reduced expression for $x$ and we may let
$\nu$ be the unique constant mask on $\w^{\tau}$ for $y$.  We combine these
into a \em relative mask \em $\s = (\s_1, \ldots, \s_p)$ by
\[
\s_j  =
\begin{cases}
X & \text{ if $\tau_j = 0$ } \\
\nu_j & \text{ if $\tau_j = 1$. }
\end{cases}
\]
In this situation, we call $\tau$ the \em $X$-mask \em associated to $(\w, \s)$,
also denoted $\Xi(\s)$.  We denote $(\w^{\tau})^{\nu}$ by $\w^{\s}$.  We say
that position $j$ is a \em defect \em in the relative mask $\s$ if $\w^{\s[j-1]}
\w_j < \w^{\s[j-1]}$.  Note that only positions in $\s$ with mask-value $X$ can
be defects, by definition.  We will indicate these defect positions by $X^d$ in
our illustrations of relative masks.

\begin{example}
The relative masks encoding the Bruhat interval $[s_2, s_2 s_1 s_3 s_2]$ in type
$A$ are given by
\[
\begin{tabular}{ccccccccccc}
   & $s_2$ & $s_1$ & $s_3$ & $s_2$ & & $s_2$ & $s_1$ & $s_3$ & $s_2$ & subexpression for $x \in [s_2, s_2 s_1 s_3 s_2]$ \\
   $\s =$ & 0 & 0 & 0 & 1 & \ \ \ \ $\tau = $ & 1 & 1 & 1 & 1 & $s_2 s_1 s_3 s_2$ \\
   $\s = $ & 1 & 0 & 0 & $X^d$ & \ \ \ \ $\tau = $ & 1 & 1 & 1 & 0 & $s_2 s_1 s_3$ \\
   $\s = $ & 0 & 0 & $X$ & 1 & \ \ \ \ $\tau = $ & 1 & 1 & 0 & 1 & $s_2 s_1 s_2$ \\
   $\s = $ & 0 & $X$ & 0 & 1 & \ \ \ \ $\tau = $ & 1 & 0 & 1 & 1 & $s_2 s_3 s_2$ \\
   $\s = $ & $X$ & 0 & 0 & 1 & \ \ \ \ $\tau = $ & 0 & 1 & 1 & 1 & $s_1 s_3 s_2$ \\
   $\s = $ & 1 & 0 & $X$ & $X^d$ & \ \ \ \ $\tau = $ & 1 & 1 & 0 & 0 & $s_2 s_1$ \\
   $\s = $ & 1 & $X$ & 0 & $X^d$ & \ \ \ \ $\tau = $ & 1 & 0 & 1 & 0 & $s_2 s_3$ \\
   $\s = $ & $X$ & 0 & $X$ & 1 & \ \ \ \ $\tau = $ & 0 & 1 & 0 & 1 & $s_1 s_2$ \\
   $\s = $ & $X$ & $X$ & 0 & 1 & \ \ \ \ $\tau = $ & 0 & 0 & 1 & 1 & $s_3 s_2$ \\
   $\s = $ & $X$ & $X$ & $X$ & 1 & \ \ \ \ $\tau = $ & 0 & 0 & 0 & 1 & $s_2$ \\
\end{tabular}
\]
Here, $\w^{\s} = s_2$ for all of these masks.
\end{example}

Our goal is to give a matching on the Hasse diagram of the Bruhat interval
$[y,w]$ using the relative masks for $y$ on a fixed reduced expression for $w$
as an encoding.  The following definition will allow us to define a procedure
that is reversible.

\begin{definition}\label{d:shifted_descent}
Let $\w = \w_1 \cdots \w_p$ be a reduced expression for an element $w \in W$ and let $\s$ be a
relative mask on $\w$ with $X$-mask $\tau$.  We call position $j$ a \em shifted descent \em of
$(\w, \s)$ if $\w^{\tau[j-1]} \geq \w^{\s[j]}$ in Bruhat order.
\end{definition}

\begin{example}
Consider
\[
\begin{tabular}{ccccccccccc}
   & $s_2$ & $s_1$ & $s_3$ & $s_2$ & & $s_2$ & $s_1$ & $s_3$ & $s_2$ & subexpression for $x \in [s_2, s_2 s_1 s_3 s_2]$ \\
   $\s = $ & 0 & $X$ & 0 & 1 & \ \ \ \ $\tau = $ & 1 & 0 & 1 & 1 & $s_2 s_3 s_2$ \\
   $\s = $ & $X$ & 0 & 0 & 1 & \ \ \ \ $\tau = $ & 0 & 1 & 1 & 1 & $s_1 s_3 s_2$ \\
\end{tabular}
\]
The first mask has position 4 as a shifted descent because $s_2 s_3 \geq s_2$.
The second mask does not have position 4 as a shifted descent because $s_1 s_3
\ngeq s_2$.
\end{example}

We are now in a position to define our matching.  The rough idea that motivates
the following definition is to remove the rightmost $X$ from a relative mask
$\s$ in a way that preserves $\w^{\s}$ and is reversible.

\begin{definition}\label{d:matching}
Let $\s$ be a relative mask on $\w$ with $X$-mask $\tau$.  Find the rightmost
position $j$ in $(\w, \s)$ where one of the following conditions holds, and
apply the given transformation to obtain a new relative mask denoted $\varphi(\s)$:
\begin{itemize}
    \item[(1)]  If $\s_j = X$ and $\s_j$ is not a defect then change $\s_j$ to
        0.
    \item[(2)]  If $\s_j = 0$ then change $\s_j$ to $X$.  Note that by
        definition, $\s_j$ cannot be a defect in this case.
    \item[(3)]  If $\s_j = X$ and $\s_j$ is a defect then $\w^{\tau[j-1]} \geq
        \w^{\s[j-1]} > \w^{\s[j-1]} \w_j$.  Hence, we may assign the unique
        constant mask for $\w^{\s[j-1]} \w_j$ on $\w^{\tau[j-1]}$ to the entries
        of $\w^{\tau[j-1]}$ and set $\s_j$ to 1.
    \item[(4)]  If $\s_j = 1$ and $\s_j$ is a shifted descent then $\w^{\tau[j-1]} \geq
        \w^{\s[j]}$ so we may assign the unique constant mask for $\w^{\s[j]}$
        on $\w^{\tau[j-1]}$ to the entries of $\w^{\tau[j-1]}$ and set $\s_j$ to
        $X$.  Note that by definition, $\s_j$ becomes a defect in this case.
\end{itemize}
\end{definition}

\begin{example}\label{e:interval_matching}
The matching given by $\varphi$ on $[s_2, s_2 s_1 s_3 s_2]$ is:

\[
\xymatrix {
& s_2 s_1 s_3 s_2 \cong [ 0 0 0 1 ] \ar@{-}[dl] & & \\
s_2 s_1 s_3 \cong [ 1 0 0 X^d ] & s_2 s_3 s_2 \cong [ 0 X 0 1 ] \ar@{-}[dl] & s_2 s_1 s_2 \cong [ 0 0 X 1 ] \ar@{-}[dl] & s_1 s_3 s_2 \cong [ X 0 0 1 ] \ar@{-}[dl] \\
s_2 s_3 \cong [ 1 X 0 X^d ] & s_2 s_1 \cong [ 1 0 X X^d ] & s_1 s_2 \cong [ X 0 X 1] & s_3 s_2 \cong [ X X 0 1 ] \ar@{-}[dl] \\
  & & s_2 \cong [ X X X 1 ] & \\
}
\]
\end{example}

We now give our main results.

\begin{lemma}\label{l:valid}
The function $\varphi$ given in Definition~\ref{d:matching} always produces a
valid relative mask.
\end{lemma}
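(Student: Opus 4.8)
The plan is to argue case by case according to which of the four transformations in Definition~\ref{d:matching} is applied at the selected rightmost position $j$, and in each case verify that the output data still assembles into a legal relative mask. Recall that a relative mask is valid precisely when (a) its $X$-mask $\tau$ (the entries that are $X$, recorded as $0$, together with the non-$X$ entries recorded as $1$) is a constant mask on $\w$ for some $x \le w$, and (b) the entries at the $1$-positions of $\tau$ form a constant mask on the reduced expression $\w^\tau$ for some $y \le x$. So for each case I must check that the new $X$-mask is still constant on $\w$, and that the new collection of $\{0,1\}$-entries sitting on the new $\w^\tau$ is still constant on that reduced word.

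For cases (1) and (2) the $X$-mask changes only in position $j$: in (1) an $X$ at a non-defect position becomes $0$ (so $\tau_j$ goes from $0$ to $1$), and in (2) a $0$ becomes $X$ (so $\tau_j$ goes from $1$ to $0$). Here the key point is that position $j$ is not a defect, so by the characterization in the proof of Lemma~\ref{l:unique_constant_mask} both mask-values are "legal" at that position for the prefix $\w^{\tau[j-1]}$ — changing between them just records whether we multiply by $\w_j$ when it is an ascent. I would spell out that $\w^{\tau[j-1]}\w_j > \w^{\tau[j-1]}$ forces the greedy algorithm to produce the new $\tau$ as the unique constant mask for the new element $\w^{\tau[j-1]}\w_j\,\w_{j+1}^{\tau_{j+1}}\cdots$, hence the new $\tau$ is constant; and that the positions $>j$ of the relative mask are untouched while the positions $<j$ keep exactly the same prefixes $\w^{\tau[i]}$ and $\w^{\s[i]}$, so the constancy of the inner mask is inherited. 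In (2) one should also note the parenthetical remark that a position just turned from $0$ to $X$ genuinely cannot be a defect, so the ``only $X$-positions can be defects'' requirement is preserved.

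Cases (3) and (4) are the substantive ones and I expect them to be the main obstacle, because there the $X$-mask is rebuilt on the entire prefix $\w^{\tau[j-1]}$, not just at position $j$. In case (3), $\s_j = X$ is a defect, so $\w^{\s[j-1]}\w_j < \w^{\s[j-1]} \le \w^{\tau[j-1]}$; the inequality $\w^{\tau[j-1]} \ge \w^{\s[j-1]}\w_j$ is what licenses invoking Lemma~\ref{l:unique_constant_mask} to get a constant mask for $\w^{\s[j-1]}\w_j$ on the reduced word $\w^{\tau[j-1]}$. Setting $\s_j := 1$ means the new $\tau$ agrees with the old one on positions $\ge j$ (position $j$ changes from $0$ to $1$ in $\tau$, which is consistent since $\w_j$ is a descent of $\w^{\tau[j-1]}$... — here I must be careful and instead check directly that the greedy algorithm reproduces the new $\tau$), and the new inner constant mask on the new $\w^\tau$ is built from the Lemma~\ref{l:unique_constant_mask} output on the prefix glued to the old inner entries beyond $j$; I need to check these glue to a genuine reduced expression for the new intermediate element $x'$ and that the inner data is constant on it, using that $\w^{\s[j-1]}\w_j \le \w^{\s}$-tail is unaffected. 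Case (4) is the formal mirror: $\s_j = 1$ is a shifted descent, so $\w^{\tau[j-1]} \ge \w^{\s[j]}$, we take the constant mask for $\w^{\s[j]}$ on $\w^{\tau[j-1]}$ and set $\s_j := X$, which by the remark becomes a defect; again I verify the new $\tau$ is constant on $\w$ and the new inner assignment is constant on the new $\w^\tau$. The crux throughout is bookkeeping: that replacing the prefix data by the Lemma~\ref{l:unique_constant_mask} output does not disturb the suffix relations, which follows because in all four cases the element $\w^{\s}$ (the bottom element $y$) and all the partial products $\w^{\s[i]}$ for $i \ge j$ are preserved by construction, so the suffix of the inner mask remains a valid constant continuation.
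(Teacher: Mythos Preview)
There is a genuine gap. You have misidentified where the difficulty lies. In all four cases the $X$-mask $\tau$ changes only at position $j$: in cases (3) and (4) what gets rebuilt on the prefix is the \emph{inner} $\{0,1\}$-mask on $\w^{\tau[j-1]}$, not the $X$-mask itself (the $X$-positions among $1,\dots,j-1$ are untouched). So cases (1),(3) together are the ``$\tau_j:0\to 1$'' situation and cases (2),(4) together are the ``$\tau_j:1\to 0$'' situation. Your discussion of (3) and (4) as the substantive cases, with the $X$-mask rebuilt, is based on a misreading of Definition~\ref{d:matching}.

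The real content of the lemma is showing that the new $X$-mask $\tau'$ is still a constant mask on $\w$, and the hard direction is $\tau_j:1\to 0$ (your cases (2) and (4)). Here $\w^{\tau'[k-1]}$ changes for every $k>j$, so a position $k>j$ (where $\tau'_k=\tau_k=1$) could become a $1$-defect in $\tau'$. Your claim that constancy is ``inherited'' because positions $>j$ are untouched addresses only the inner mask, not $\tau'$. The paper rules this out by contradiction: if $k>j$ were a $1$-defect in $\tau'$, then using the Lifting Lemma one shows $\w^{\s[k]}\le \w^{\tau'[k-1]}\le \w^{\tau[k-1]}$, i.e.\ $k$ was already a shifted descent in $\s$, contradicting the \emph{rightmost} choice of $j$. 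Nothing in your proposal invokes the rightmost condition, and without it the statement is simply false (changing an arbitrary $0$ to $X$ need not give a constant $X$-mask). For the other direction $\tau_j:0\to 1$ (cases (1),(3)), one also needs an argument---the paper cites \cite[Proposition~5.3.9]{b-b} for the fact that changing the rightmost $0$ to $1$ in a constant mask yields a constant mask---whereas your sketch (``the greedy algorithm produces the new $\tau$'') assumes what is to be proved. Finally, note that in case (3) the defect hypothesis says $\w_j$ is a descent for $\w^{\s[j-1]}$, not for $\w^{\tau[j-1]}$; since $\tau$ is constant and $\tau_j=0$, $\w_j$ is in fact an ascent for $\w^{\tau[j-1]}$.
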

\begin{proof}
Let $\s$ be a relative mask on $\w$.  Applying $\varphi$ to $\s$ interchanges
exactly one $X$ entry in position $j$ with an entry that is either 0 or 1, and
also possibly rearranges the 0 and 1 entries lying to the left of $j$.  By
definition, we never create a 0-defect nor 1-defect at $j$, and applying any of
the rules at position $j$ preserves the element $\w^{\s[j]}$, so the defect
status of positions $k > j$ does not change.  Hence, to show that $\varphi(\s)$ is
valid it suffices to show that $\varphi(\s)$ has a constant $X$-mask.
Let $\tau$ denote the $X$-mask of $\s$.

Suppose $\varphi$ acts at position $j$ by changing $\s_j$ from $X$ to 1 or 0.
Then the $X$-mask $\tau'$ of $\varphi(\s)$ is the result of changing the
rightmost 0 to a 1 in the $X$-mask $\tau$ of $\s$.  Since $\tau$ is a constant
mask, we have that $\tau'$ is constant by \cite[Proposition 5.3.9]{b-b}.

Now consider the case where $\varphi$ acts at position $j$ by changing $\s_j$
from 1 or 0 to $X$, and suppose for the sake of contradiction that the $X$-mask
$\tau'$ of $\varphi(\s)$ is not constant.  By Definition~\ref{d:matching}, we
have $\s_i = 1$ for all $i > j$, for otherwise we would have applied $\varphi$
to position $i$.  Since $\tau'$ is not constant, there exists a leftmost
position $k > j$ that becomes a 1-defect in $\tau'$.  Hence, we have the
schematic shown below.
\[
\tiny
\xymatrix @-2pc {
& 1 & & \cdots & & j & & \cdots & & (k-1) & k \\
\s = & \ast & X & \cdots & \ast & \ast & 1 & \cdots & 1 & 1 & 1 \\
\tau' = & 1 & X & \cdots & 1 & X & 1 & \cdots & 1 & 1 & 1^d \\
}
\]
Here, the positions marked by $\ast$ are the non-$X$ positions of $\s$, so these
positions have mask-value 1 in $\tau'$.

If $\s_j = 0$, then $\w^{\s[k-1]} \leq \w^{\tau'[k-1]}$ and $\w_k$ is a
descent for $\w^{\tau'[k-1]}$ while $\w_k$ is an ascent for $\w^{\s[k-1]}$.  Hence, by
the Lifting Lemma~\ref{l:lifting} we have $\w^{\s[k]} \leq \w^{\tau'[k-1]} \leq
\w^{\tau[k-1]}$ so $k$ is a shifted descent in $\s$, contradicting the
rightmost choice of move in Definition~\ref{d:matching}.

Next, suppose $\s_j = 1$, as shown in the schematic below.
\[
\tiny
\xymatrix @-2pc {
& 1 & & \cdots & & j & & \cdots & & (k-1) & k \\
\s = & \ast & X & \cdots & \ast & 1 & 1 & \cdots & 1 & 1 & 1 \\
\varphi(\s) = & \ast & X & \cdots & \ast & X^d & 1 & \cdots & 1 & 1 & 1 \\
\tau' = & 1 & X & \cdots & 1 & X & 1 & \cdots & 1 & 1 & 1^d \\
}
\]
Then, since $\varphi$ operates on $\s$ at position $j$, we
have that $j$ is a shifted descent.  Therefore, position $j$ in the mask
$\varphi(\s)$ is $X^d$ and $\w^{\varphi(\s)[j]} = \w^{\s[j]}$.  

Hence, we have $\w^{\varphi(\s)[k-1]} \leq \w^{\tau'[k-1]}$ because we have
exhibited one as a submask of the other without 1-defects.  Moreover, $\w_k$ is a descent
for $\w^{\tau'[k-1]}$ and an ascent for $\w^{\varphi(\s)[k-1]}$, so by the
Lifting Lemma~\ref{l:lifting},
we have $\w^{\varphi(\s)[k-1]} \w_k \leq \w^{\tau'[k-1]}$.  Hence,
\[ \w^{\s[k]} = \w^{\s[k-1]} \w_k = \w^{\varphi(\s)[k-1]} \w_k \leq
\w^{\tau'[k-1]} \leq \w^{\tau[k-1]} \]
so $k$ was a shifted descent in $\s$ to begin with, contradicting the rightmost
choice of move in Definition~\ref{d:matching}.

Since all cases where $\tau'$ is not constant lead to a contradiction, we have
completed the proof that $\varphi(\s)$ is a valid relative mask.
\end{proof}

\begin{lemma}\label{l:invol}
The function $\varphi$ given in Definition~\ref{d:matching} is an
involution on the set of relative masks on $\w$.
\end{lemma}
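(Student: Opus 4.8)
The plan is to exploit the fact that the four rules of Definition~\ref{d:matching} come in two pairs of mutual inverses acting at a fixed position $j$: rule~(1) ($X\to 0$) is undone by rule~(2) ($0 \to X$), and rule~(3) ($X^d \to 1$, which rearranges the $0/1$ entries left of $j$ into the constant mask for $\w^{\s[j-1]}\w_j$ on $\w^{\tau[j-1]}$) is undone by rule~(4) ($1 \to X^d$, which rearranges them into the constant mask for $\w^{\s[j]}$ on $\w^{\tau[j-1]}$); here $\tau$ denotes the $X$-mask of $\s$. Granting this pairing, the lemma reduces to two claims: (a) $\varphi$ acts on $\varphi(\s)$ at the same position $j$ at which it acts on $\s$, and (b) at that position it applies the partner of the rule applied to $\s$. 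Given (a) and (b), $\varphi(\varphi(\s)) = \s$ follows: a rule and its partner both leave the positions $k > j$ and the set of $X$-positions left of $j$ untouched, and they fix the value at $j$ (one reverses the other there); for rules~(1),(2) the $0/1$ entries left of $j$ are never changed, while for rules~(3),(4) the \emph{composition} resets those entries to the constant mask for $\w^{\s[j-1]}$ on $\w^{\tau[j-1]}$ (using $\w^{\varphi(\s)[j]}=\w^{\s[j-1]}$ in rule~(4) and $\w^{\varphi(\s)[j-1]}\w_j=\w^{\s[j-1]}$ in rule~(3), together with $\w^{\tau'[j-1]}=\w^{\tau[j-1]}$), and since the $0/1$ entries of $\s$ left of $j$ have no defects, Lemma~\ref{l:unique_constant_mask} says that is exactly what they already are.

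For claim (b), and for the half of (a) stating that $j$ is active in $\varphi(\s)$, I would check the four cases. If rule~(1) acted, then $\varphi(\s)_j = 0$ and rule~(2) applies. If rule~(2) acted, then $\varphi(\s)_j = X$, and since the entries left of $j$ are unchanged the defect test at $j$ gives the same (negative) answer as for $\s_j = 0$, so $j$ is a non-defect $X$ and rule~(1) applies. If rule~(3) acted, then $\varphi(\s)_j = 1$ and $\w^{\varphi(\s)[j]} = \w^{\s[j-1]} \le \w^{\tau[j-1]} = \w^{\tau'[j-1]}$ (the $1$-positions of $\s[j-1]$ lie among those of $\tau[j-1]$, and rule~(3) does not change which positions left of $j$ are $X$), so $j$ is a shifted descent of $(\w, \varphi(\s))$ and rule~(4) applies. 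If rule~(4) acted, then $\varphi(\s)_j = X$ is a defect by the remark in Definition~\ref{d:matching}, so rule~(3) applies. In each case the rule obtained is the partner of the one applied to $\s$.

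It remains to prove the other half of (a): every position $k > j$ is inactive in $\varphi(\s)$, so the rightmost active position of $\varphi(\s)$ is $j$. The mask-values at positions $k > j$ are unchanged and $\w^{\s[k]} = \w^{\varphi(\s)[k]}$ there, so the defect status of every $k > j$ is unchanged, leaving only the shifted-descent status to control. If rule~(2) or (4) acted, then $\tau'$ is obtained from $\tau$ by deleting the letter in position $j$, so $\w^{\tau'[k-1]} \le \w^{\tau[k-1]}$ for every $k-1 \ge j$; hence a shifted descent $k > j$ of $(\w, \varphi(\s))$ would already be a shifted descent of $(\w, \s)$, contradicting the rightmost choice of $j$. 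If instead rule~(1) or (3) acted, then $\tau'$ is obtained from $\tau$ by \emph{inserting} a letter in position $j$, so $\w^{\tau'[k-1]} \ge \w^{\tau[k-1]}$ and this monotonicity runs the wrong way. Here I would take the leftmost $k > j$ that is a shifted descent of $(\w, \varphi(\s))$ and argue as in the proof of Lemma~\ref{l:valid}: $\w^{\tau[k-1]}$ is covered by $\w^{\tau'[k-1]}$ (both reduced, since $\tau$ and $\tau'$ are constant masks), $\w_k$ is a right ascent for $\w^{\s[k-1]} = \w^{\varphi(\s)[k-1]}$, and a Lifting-Lemma argument (Lemma~\ref{l:lifting}) applied to $\w^{\s[k]} \le \w^{\tau'[k-1]}$ yields $\w^{\tau[k-1]} \ge \w^{\s[k]}$, so that $k$ was already a shifted descent of $(\w, \s)$ --- a contradiction.

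I expect the last step --- controlling the shifted-descent status to the right of $j$ when the $X$-mask acquires a letter --- to be the main obstacle, absorbing most of the work and mirroring closely the Lifting-Lemma bookkeeping in the proof of Lemma~\ref{l:valid}; everything else is either a direct case check (claim (b) and the activity of $j$) or a one-line monotonicity argument (the rule~(2)/(4) half of claim (a)).
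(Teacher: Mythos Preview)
Your overall architecture is sound and matches the paper's: the pairing of rules (1)/(2) and (3)/(4), the observation that $\w^{\s[j]}$ is preserved so defect status to the right of $j$ is unchanged, and the reduction of claim (a) to controlling shifted descents at positions $k>j$ are all correct. Your monotonicity argument for rules (2)/(4) is also fine: there $\w^{\tau'[k-1]}\le\w^{\tau[k-1]}$, so a new shifted descent would have been an old one.

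The gap is in the rule~(1)/(3) half of claim~(a). You propose to take the leftmost $k>j$ that is a shifted descent of $(\w,\varphi(\s))$ and run ``a Lifting-Lemma argument as in the proof of Lemma~\ref{l:valid}'' to conclude $\w^{\s[k]}\le\w^{\tau[k-1]}$. But the Lifting Lemma requires a \emph{simple} reflection that is a right descent for the larger element. In Lemma~\ref{l:valid} this was available because the hypothesis there was precisely that $\tau'$ had a $1$-defect at $k$, i.e.\ $\w_k$ was a right descent for $\w^{\tau'[k-1]}$. In your situation $\tau'$ is already known to be constant (by Lemma~\ref{l:valid}), and since $\s_k=1$ for all $k>j$ we have $\tau'_k=1$, so $\w_k$ is a right \emph{ascent} for $\w^{\tau'[k-1]}$. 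Thus Lifting with $\w_k$ gives nothing. The only reflection relating $\w^{\tau'[k-1]}$ to $\w^{\tau[k-1]}$ is the non-simple one coming from deleting position $j$, and Lifting does not apply to it. So the argument you sketch does not go through as written, and the analogy to Lemma~\ref{l:valid} is misleading: the two situations are not parallel.

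The paper handles this step by a genuinely different method: it assumes a counterexample with $l(w)$ minimal, observes that minimality forces all $X$-entries other than the one at $j$ (resp.\ $k$) to be absent, and then does a four-way case analysis on the pair $(\s_1,\varphi^2(\s)_1)\in\{0,1\}^2$, using the Exchange Condition together with Lemma~\ref{l:unique_constant_mask} to strip off $\w_1$ and manufacture a shorter counterexample in each case. This bookkeeping is substantially more involved than the single Lifting step you anticipate, and constitutes most of the proof.
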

\begin{proof}
To see that $\varphi$ is an involution, we observe that rule (2) inverts rule
(1) in Definition~\ref{d:matching}, and it is straightforward to verify that
that rule (4) inverts rule (3).  Moreover, applying any of the rules at position
$j$ preserves the element $\w^{\s[j]}$, so the mask-value and defect status of
positions $k > j$ does not change.  

Since the rules in Definition~\ref{d:matching} depend only on mask-value, defect
status and shifted descent status, the only way in which applying a move at
position $j$ can create a new move at position $k > j$ is if $k$ becomes a
shifted descent in $\varphi(\s)$.  Moreover, this can only occur as a result of
applying rules (1) or (3) to $\s$.

For the sake of contradiction suppose this occurs and among all counterexamples,
consider one such that $l(w)$ is minimal.  Then, $\varphi(\s)$ operates at
position $j$ and $\varphi(\varphi(\s))$ operates at position $k > j$.  
Let $\tau$ be the $X$-mask of $\s$ and $\tau'$ be the $X$-mask of
$\varphi^2(\s)$.
Thus, we have the following schematic.
\[
\tiny
\xymatrix @-2.2pc {
                & 1 & \cdots & j & \cdots & i & \cdots & k-1 & k \\
\s =            & \ast & \cdots & X^{(d)} & \cdots & 1 & \cdots & 1 & 1   \\
\varphi(\s) =   & \ast & \cdots & \ast & \cdots & 1 & \cdots & 1 & 1   \\
\varphi^2(\s) = & \ast & \cdots & 1    & \cdots & 1 & \cdots & 1 & X^d   \\
}
\]
We begin by justifying the main points of this schematic.  Note that we do not
assume the defect status of $\s_j$ is known.  If $\s_j$ is a defect then
$\varphi(\s)_j = 1$, and if it is not then $\varphi(\s)_j = 0$.  In any case,
observe that $\varphi^2(\s)_j$ must be 1, for otherwise the mask $\varphi^2(\s)$
shows that position $k$ is already a shifted descent in $\s$.  Also, observe
that all the entries between $j$ and $k$ have mask-value 1 in $\s$ and
$\varphi(\s)$, for otherwise we contradict that $j$ is the rightmost move in
$\s$.
By assuming that $\w$ is minimal length, we have that there are no other $X$
entries in any of the masks because if there exists an $X$ entry in one of the
masks, it exists in all three of the masks, by virtue of the fact that we only
adjust the $X$-masks at positions $j$ and $k$ as shown.  Hence, any
$X$-positions could be removed from all three masks simultaneously.

Next, we consider all possible cases of mask-values for $\s$ and $\varphi^2(\s)$
on $\w_1$.  

{\bf Case:}  ($\s_1 = 0$ and $\varphi^2(\s)_1 = 0$) or ($\s_1 = 1$ and
$\varphi^2(\s)_1 = 1$).  Here, we have
\[
\tiny
\xymatrix @-2.2pc {
                & 1 & \cdots & j & j+1 & \cdots & k-1 & k \\
\s =            & 0 & \cdots & X^{(d)} & 1 & \cdots & 1 & 1   \\
\varphi^2(\s) = & 0 & \cdots & 1 & 1 & \cdots & 1 & X^d   \\
} \ \ \ \ \ \ \ \  \parbox{0.3in}{\vspace{0.25in} or } 
\xymatrix @-2.2pc {
                & 1 & \cdots & j & j+1 & \cdots & k-1 & k \\
\s =            & 1 & \cdots & X^{(d)} & 1 & \cdots & 1 & 1   \\
\varphi^2(\s) = & 1 & \cdots & 1 & 1 & \cdots & 1 & X^d   \\
}
\]
Let $\nu$ denote the restriction of $\s$ to $\w_2 \cdots \w_k$.  Then the
restriction of $\varphi^2(\s)$ to $\w_2 \cdots \w_k$ shows that $k$ becomes a
shifted descent in $\varphi(\nu)$.  Therefore, we obtain a counterexample on
$\w_2 \cdots \w_k$, contradicting our minimal length choice of $w$.

{\bf Case:}  $\s_1 = 0$ and $\varphi^2(\s)_1 = 1$.
In this case, $\w_1$ is a left descent for $\w^{\s}$.
\[
\tiny
\xymatrix @-2.2pc {
                & 1 & \cdots & j & j+1 & \cdots & k-1 & k \\
\s =            & 0 & \cdots & X^{(d)} & 1 & \cdots & 1 & 1   \\
\varphi^2(\s) = & 1 & \cdots & 1 & 1 & \cdots & 1 & X^d   \\
}
\]
By the Exchange Condition, there exists some position $i$ such that $\w_1
\w^{\s} = \w^{\nu}$ where $\nu$ is obtained from $\s$ by changing a single
mask-value 1 entry at $\w_i$ to have mask-value 0.  Observe that if $i > j$ then
$\w_1 \w^{\s[i-1]} = \w^{\s[i]}$ so changing $\s_1$ to 1 would witness that $i$
was a shifted descent in $\s$, a contradiction.

The mask $\nu$ may be not be a constant mask on $\w^{\tau}$, but there exists a
unique constant mask $\gamma$ for the element $\w^{\nu}$ on $\w_2^{\tau_2}
\cdots \w_k^{\tau_k}$ and $\gamma$ still has mask-value 1 on $\w_{j+1} \cdots
\w_k$ by the algorithm from Lemma~\ref{l:unique_constant_mask}.  By abuse of
notation, let $\gamma$ denote the corresponding relative mask on $\w_2 \cdots
\w_k$.

If there exists a shifted descent in position $m$ of $\gamma$ where $j < m \leq
k$, then $m$ must have been a shifted descent in $\s$, a contradiction.  To see
this, observe that $\w^{\s[m]} = \w_1 \w^{\gamma[m]}$ because $\w^{\s} = \w_1
\w^{\gamma}$ and these reduced expressions agree in positions $m, \ldots, k$.
Therefore, if $\w^{\gamma[m]} \leq \w^{\tau[m-1]}$ then $\w_1 \w^{\s[m]} \leq
\w^{\tau[m-1]}$ and by the Lifting Lemma, we have $\w^{\s[m]} \leq
\w^{\tau[m-1]}$.

Then, the restriction of $\varphi^2(\s)$ to $\w_2 \cdots \w_k$ shows that $k$
becomes a shifted descent in $\varphi(\gamma)$.  Therefore, we obtain a
counterexample on $\w_2 \cdots \w_k$, contradicting our minimal length choice of
$w$.

{\bf Case:}  $\s_1 = 1$ and $\varphi^2(\s)_1 = 0$.
In this case, $\w_1$ is a left descent for $\w^{\varphi^2(\s)}$.
\[
\tiny
\xymatrix @-2.2pc {
                & 1 & \cdots & j & j+1 & \cdots & k-1 & k \\
\s =            & 1 & \cdots & X^{(d)} & 1 & \cdots & 1 & 1   \\
\varphi^2(\s) = & 0 & \cdots & 1 & 1 & \cdots & 1 & X^d   \\
}
\]
By the Exchange Condition, there exists some position $i$ such that $\w_1
\w^{\varphi^2(\s)} = \w^{\nu}$ where $\nu$ is obtained from $\varphi^2(\s)$ by
changing a single mask-value 1 entry to have mask-value 0.  
The mask $\nu$ may be not be a constant mask on $\w_2 \cdots \w_{k-1}$, but
there exists a constant mask $\gamma$ on $\w_2 \cdots \w_{k-1}$ for the element
$\w^{\nu}$ by Lemma~\ref{l:unique_constant_mask}.  Let $\rho$ denote $\s$
restricted to $\w_2 \cdots \w_k$.
Then, $\gamma$ shows that $k$ becomes a shifted descent in $\varphi(\rho)$.
Therefore, we obtain a counterexample on $\w_2 \cdots \w_k$, contradicting our
minimal length choice of $w$.

This in all cases, we have shown that applying a move at position $j$ cannot
create a new move at some position $k > j$.  
Hence, $\varphi$ is an involution.
\end{proof}

\begin{theorem}\label{t:main}
The function $\varphi$ given in Definition~\ref{d:matching} is a complete
matching of the Hasse diagram of Bruhat order on $[y,w]$ whenever $y < w$.
\end{theorem}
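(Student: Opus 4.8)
The plan is to derive the theorem from Lemmas~\ref{l:valid} and~\ref{l:invol}, which already do the hard work of showing that $\varphi$ is a well-defined involution on the set of all relative masks on $\w$; what remains is to check that $\varphi$ restricts to a fixed-point-free involution on the relative masks that encode $[y,w]$, and that every pair $\{\s,\varphi(\s)\}$ it produces is an edge of the Hasse diagram. First I would set up the dictionary between $[y,w]$ and relative masks: given $x\in[y,w]$, Lemma~\ref{l:unique_constant_mask} supplies the unique constant mask $\tau$ for $x$ on $\w$ and then the unique constant mask $\nu$ for $y$ on the reduced expression $\w^{\tau}$, and assembling $\tau$ and $\nu$ as in the definition of a relative mask gives a relative mask $\s$ with $\Xi(\s)=\tau$ and $\w^{\s}=y$; conversely $\s$ recovers $x=\w^{\Xi(\s)}$. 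For a constant mask each $1$-entry strictly increases length and each $0$-entry leaves it fixed, so the selected subexpression is reduced and its element has length equal to the number of $1$-entries; in particular $l(x)$ is the number of $1$-entries of $\Xi(\s)$. Since each of the four rules of Definition~\ref{d:matching} preserves $\w^{\s[j]}$ at the position $j$ where it acts, it preserves $\w^{\s}=\w^{\s[p]}$, so by Lemmas~\ref{l:valid} and~\ref{l:invol} the involution $\varphi$ restricts to an involution on the relative masks with $\w^{\s}=y$, i.e.\ on $[y,w]$.

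Next I would identify the fixed points. A relative mask is fixed by $\varphi$ exactly when no rule of Definition~\ref{d:matching} applies at any position: rule~(2) excludes a $0$-entry, rules~(1) and~(3) together exclude every $X$-entry (defect or not), and rule~(4) excludes every $1$-entry that is a shifted descent. The first three conditions force the all-$1$'s mask, for which $\w^{\s}=w$; and for that mask no position is a shifted descent, since a shifted descent at $j$ would require $\w[j-1]\geq\w[j]$, impossible for reduced prefixes of lengths $j-1$ and $j$. Hence the unique fixed point of $\varphi$ encodes $w$, so when $y<w$ there is no fixed point in $[y,w]$ and $\varphi$ is a fixed-point-free involution there.

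Finally I would check that matched elements are joined by a Hasse edge. Say $\varphi$ acts on $\s$ at position $j$, carrying the element $x$ it encodes to the element $x'$ encoded by $\varphi(\s)$. In every rule the $X$-mask changes only by toggling coordinate $j$ between $0$ and $1$: rules~(3) and~(4) redistribute the non-$X$ constant-mask entries to the left of $j$ but leave the set of $X$-positions fixed, and coordinates to the right of $j$ are not touched. Thus $\tau=\Xi(\s)$ and $\tau'=\Xi(\varphi(\s))$ agree outside position $j$; after interchanging $\s$ and $\varphi(\s)$ if necessary (legitimate since $\varphi$ is an involution), we may assume $\tau'$ is obtained from $\tau$ by changing a $0$ to a $1$. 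Then $\w^{\tau}$ is a subword of the reduced expression $\w^{\tau'}$ and is itself reduced, so $x\leq x'$, while $l(x')=l(x)+1$ since each side is the number of $1$-entries of the corresponding $X$-mask; hence $x\lessdot x'$. Therefore the orbits of $\varphi$ on $[y,w]$ are precisely pairs of elements joined by a Hasse edge, so $\varphi$ is a complete matching. The step that demands genuine care is this last one — tracking exactly how rules~(3) and~(4) move the $X$-mask — since the substantive content, validity and the involution property, is already established.
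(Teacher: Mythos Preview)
Your proof is correct and follows essentially the same approach as the paper's own proof: you invoke Lemmas~\ref{l:valid} and~\ref{l:invol} for the substantive work, then verify that $\varphi$ preserves $\w^{\s}=y$, that the only fixed point is the all-$1$'s mask (which lies in $[y,w]$ only when $y=w$), and that toggling a single $X$ in the $X$-mask gives a Bruhat cover. The paper argues identically but more tersely, simply noting that $l(w)-l(x)$ equals the number of $X$ entries and that unmatched masks must consist entirely of $1$'s; your added detail on why rules~(3) and~(4) leave the set of $X$-positions fixed to the left of $j$, and why rule~(4) cannot apply to the all-$1$'s mask, just makes explicit what the paper leaves implicit.
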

\begin{proof}
Encode $[y, w]$ as a set of relative masks for $y$ on a fixed reduced
expression $\w$ for $w$, so each $x \in [y,w]$ is given by $\w^{\Xi(\s)}$ where
$\Xi(\s)$ is the $X$-mask of a relative mask $\s$.  By Lemmas~\ref{l:valid} and
\ref{l:invol}, the function $\varphi$ given in Definition~\ref{d:matching} is
an involution that interchanges exactly one $X$ entry in each relative mask for
an entry that is either 0 or 1.  Since the $X$-masks of both elements are
constant masks, $l(w)-l(x)$ is given by the number of $X$ entries in the
relative mask, so this operation represents a cover relation in Bruhat order.
Hence, we have that $\varphi$ is a matching on the Hasse diagram of $[y,w]$.
Unmatched relative masks must contain no $0$ entries nor $X$ entries at all, so
consist of all 1 entries, and this occurs only if $w = y$.  Hence, the matching
is complete when $y < w$, and the result follows.
\end{proof}

\begin{corollary}\label{c:main}
The M\"obius function of the Bruhat interval $[y,w]$ is $\mu(x,w) =
(-1)^{l(w)-l(x)}$.
\end{corollary}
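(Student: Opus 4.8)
The plan is to deduce the formula from Theorem~\ref{t:main} using the recursive characterization of the M\"obius function: $\mu$ is the unique function with $\mu(x,x)=1$ and $\mu(x,w) = -\sum_{x \le z < w}\mu(x,z)$ for $x<w$. I would argue by induction on $l(w)-l(x)$, the base case $w=x$ being immediate.

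The one substantive ingredient I need from Theorem~\ref{t:main} is the identity
\[ \sum_{y \le z \le w}(-1)^{l(z)} = 0 \qquad \text{whenever } y < w. \]
To obtain it, I would encode $[y,w]$ by the relative masks for $y$ on a fixed reduced expression $\w$ of $w$. As recorded in the proof of Theorem~\ref{t:main}, the map $\s \mapsto \w^{\Xi(\s)}$ identifies this set of masks with $[y,w]$, and $\varphi$ is a fixed-point-free involution on it whose orbits $\{\s,\varphi(\s)\}$ correspond to cover relations in Bruhat order. Since Bruhat order is graded by $l$, the two elements of each orbit have lengths differing by exactly $1$, so each orbit contributes $0$ to $\sum_z (-1)^{l(z)}$; summing over orbits yields the identity.

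Granting this, fix $x<w$ and assume inductively that $\mu(x,z)=(-1)^{l(z)-l(x)}$ for all $z$ with $x \le z < w$. Then
\[ \mu(x,w) = -\sum_{x\le z<w}(-1)^{l(z)-l(x)} = (-1)^{l(w)-l(x)} - (-1)^{-l(x)}\sum_{x\le z\le w}(-1)^{l(z)} = (-1)^{l(w)-l(x)}, \]
where the last equality uses the displayed identity on the interval $[x,w]$. This closes the induction and proves the corollary.

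I do not expect any genuine obstacle beyond Theorem~\ref{t:main} itself. The only points that need a moment's care are that the relative masks for $y$ on $\w$ are truly in bijection with the elements of $[y,w]$, so that the combinatorial matching $\varphi$ really is a perfect matching of the Hasse diagram of the poset, and that every Hasse edge of Bruhat order joins two elements whose lengths differ by exactly one --- the standard fact that $l$ is a rank function for Bruhat order.
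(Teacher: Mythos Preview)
Your proof is correct and takes essentially the same approach as the paper: both use the complete matching of Theorem~\ref{t:main} as a sign-reversing involution to show that $\sum_{y\le z\le w}(-1)^{l(w)-l(z)}=\delta_{y,w}$, and then deduce the M\"obius formula from the defining recursion for $\mu$. The paper simply packages the inductive step you write out as ``following Verma,'' whereas you make that deduction explicit.
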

\begin{proof}
Following Verma \cite{verma}, it suffices to show that there exists a complete
matching of the Hasse diagram of the Bruhat interval $[y,w]$ whenever $y < w$.
This matching can then be interpreted as a sign-reversing involution on 
\[ \sum_{y \leq x \leq w} (-1)^{l(w) - l(x)} \]
proving that the sum is equal to the Kronecker function $\delta_{y, w}$.
This follows from Theorem~\ref{t:main}.
\end{proof}

In work related to totally nonnegative flag varieties, \cite{rietsch-williams}
have given another complete matching on Bruhat intervals $[y,w]$ of a finite
Coxeter group $W$.  This matching $M$ is defined recursively, starting from an
EL-labeling of the interval and a chosen reduced expression $\w$ for $w$.  To
describe this matching, we begin with a reduced expression $\w_0$ for the
longest element $w_0$ of $W$ having $\w^{-1} = \w_p \w_{p-1} \cdots \w_1$ as a
left factor.  Then we obtain a total ordering on the reflections of $W$ using
the inversion sequence constructed from the reduced expression $\w_0$ by
\begin{equation}\label{e:reflection_order}
\w_p > \w_{p} \w_{p-1} \w_{p} > \cdots > \w_{p} \w_{p-1} \cdots \w_{p-i+1} \w_{p-i} \w_{p-i+1} \cdots \w_{p-1} \w_{p} > \cdots .
\end{equation}
We label all of the Bruhat cover relations $x' \lessdot x$ in $[y,w]$ by the
unique right reflection $t$ such that $x' = xt$.  Then, Dyer \cite{dyer3} has
shown that this is an EL-labeling.  Rietsch and Williams construct a matching
$M$ from this EL-labeling using a result of Chari \cite{chari}.

\begin{remark}\label{r:lw}
For finite Coxeter groups, we show that the matching $M$ is the same as the
matching given in Theorem~\ref{t:main}, working by downward induction on the
ranks of the partial order $[y,w]$.  We begin at the top rank $r$ containing
$w$.

Let $x$ be an unmatched element on the current rank $r$.  Consider the relative
mask $\s$ associated to $x$.  Since $x$ is a maximal unmatched element, when we
apply $\varphi$ to $\s$, we operate by placing an $X$ in the rightmost position
$i$ such that the element $\w^{\tau}$ associated to the resulting $X$-mask
$\tau = \Xi(\varphi(\s))$ still contains $y$ in Bruhat order.  Moreover,
observe that none of the entries to the right of $i$ are shifted descents, nor
do they have mask values $X$ or 0, for otherwise $\s$ would already have been
matched.  Hence, $\w^{\tau}$ is the element
\[ x \cdot (\w_p \w_{p-1} \cdots \w_{i+1} \w_i \w_{i+1} \cdots \w_{p-1} \w_{p}), \] 
and any element $x \cdot (\w_p \w_{p-1} \cdots \w_{j+1} \w_j \w_{j+1} \cdots
\w_{p-1} \w_{p})$ for $j > i$ does not contain $y$ in Bruhat order, for
otherwise $j$ would be a shifted descent in $\s$.

In comparison, \cite[Corollary 7.8]{rietsch-williams} states that the matched
edge $x' \lessdot x$ in $M$ has the largest EL-label in the sense of
Equation~(\ref{e:reflection_order}) among all of the edges descending from $x$
in $[y,w]$.  But this is equivalent to the rightmost condition that we used to
choose $i$.  Hence, we see that $\w^{\tau}$ is equal to the element $x'$ that
is matched to $x$ in $M$.  This proves that the matchings agree on all elements
down to rank $r$, and we can proceed to apply the argument to the unmatched
elements on rank $r-1$.  Continuing in this fashion, we find that the matchings
agree on $[y,w]$.  
\end{remark}

The description given in Theorem~\ref{t:main} has the advantage of being
nonrecursive and also permits some observations that are perhaps less clear in
the other language.  For example, we see that the matched edges of $M$ are
always labeled by one of the reflections that represent inversions in $\w$, so
the matching does not depend on how $\w$ is completed to a reduced expression
for $w_0$.

\section{Further questions}

Bruhat order extends to parabolic quotients of Coxeter groups as described in
\cite[Section 2.5]{b-b}.  Deodhar has given a parabolic version of the M\"obius
function formula in Deodhar~\cite[Theorem 1.2]{deodhar_mobius}, and it would be
interesting to extend the mask matching given above to recover his result.

Also, the order complex associated to a Bruhat interval $[x,w]$ is a
topological space known to be homeomorphic to the $(l(w)-l(x)-2)$-sphere.  It
would be interesting to recover the poset topology of the Bruhat intervals from
the combinatorial matching we have given above.

As a preliminary step in this direction, we have observed that our matching is
acyclic, in the sense used in discrete Morse theory.  When $W$ is a finite
Coxeter group, this could also be inferred from \cite{rietsch-williams} by
Remark~\ref{r:lw}.

\begin{definition}
Consider the Hasse diagram of Bruhat order as a directed graph with an edge $w
\rightarrow x$ if $w$ covers $x$.  Given a matching, reverse the direction of
each edge in the Hasse diagram corresponding to a matched edge.  We say the
matching is \em acyclic \em if there are no directed cycles in the resulting
directed graph.
\end{definition}

\begin{theorem}
The function $\varphi$ given in Definition~\ref{d:matching} is an acyclic
matching of the Hasse diagram of Bruhat order on $[y,w]$ whenever $y < w$.
\end{theorem}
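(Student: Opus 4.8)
The plan is to show that reversing all matched edges creates no directed cycle, by exhibiting a function on the relative masks that decreases strictly along every directed path. The natural candidate ranks masks by where the "action" happens: for a relative mask $\s$, let $j(\s)$ be the position at which $\varphi$ operates (the rightmost position satisfying one of the four conditions in Definition~\ref{d:matching}), and consider the pair $(j(\s), \epsilon(\s))$ where $\epsilon(\s) \in \{0,1\}$ records whether applying $\varphi$ increases the number of $X$ entries (a matched-down edge) or decreases it (a matched-up edge, i.e. rules (2) or (4)). A directed path in the modified Hasse diagram alternates: an unmatched down-step (a Hasse cover $x \to x'$ that is not the matched edge at $x$) followed by a reversed matched edge $x' \to x''$ (so $x'' \gtrdot x'$ via $\varphi$). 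I would argue that along each such two-step segment $x \to x' \to x''$, the quantity $j$ is nondecreasing, and if it stays equal then some secondary invariant strictly decreases, so infinite descent is impossible.

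The key structural observations to assemble, in order, are: (1) a Hasse cover $x' \lessdot x$ inside $[y,w]$ corresponds, in relative-mask language, to removing a single $X$ (equivalently, since both $X$-masks are constant, increasing the number of $X$'s by one when read from $x$'s side); by Theorem~\ref{t:main} this is exactly the effect of $\varphi$ when $\varphi$ moves an $X$ in. (2) If $x \to x'$ is the \emph{matched} edge, then by the definition of $\varphi$ the position of the changed $X$ is the \emph{rightmost} available, and on $x'$ the reverse move sits at that same position $j(\s)$ — so matched edges cost nothing to the invariant and can be ignored; the real content is the unmatched down-steps. (3) For an \emph{unmatched} down-step $x \to x'$, the $X$ that is removed sits at some position strictly to the right of $j(\s_x)$ (since $j(\s_x)$ is the position of the matched edge and the matched edge has the rightmost such position; any other cover edge from $x$ uses an $X$ in a position that was passed over, hence lies further right, by the rightmost-choice discipline built into Remark~\ref{r:lw}'s analysis). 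Then on $x'$, the mask $\s_{x'}$ has this new $X$-in-position, and because all positions to its right were already non-actionable in $x$ and nothing to the right of it changed, $j(\s_{x'})$ is at least that position, hence $j(\s_{x'}) > j(\s_x)$, giving the strict increase needed.

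I expect the main obstacle to be item (3): verifying that an unmatched down-step from $x$ genuinely produces a relative mask whose $\varphi$-action position is strictly to the right of $j(\s_x)$, and not somewhere to the left. The subtlety is that changing the mask to the right of $j(\s_x)$ could, a priori, disturb the defect or shifted-descent status of positions far to the left — but in fact it cannot: the arguments already used in Lemmas~\ref{l:valid} and \ref{l:invol} show that altering mask entries at position $\geq m$ while preserving $\w^{\s[m-1]}$ leaves the defect and shifted-descent status of all positions $< m$ untouched, and that a Hasse cover edge distinct from the matched one must remove an $X$ at a position that fails every actionability test at positions to its right in $\s_x$, forcing it rightward of $j(\s_x)$. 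Once that monotonicity of $j$ along unmatched down-steps is pinned down — with the secondary tie-breaker handling the boundary case where the removed $X$ sits exactly at a position that $\varphi$ on $x'$ then wants to revisit — acyclicity follows since $j$ is bounded by $p = l(w)$ and strictly increases infinitely often along any purported directed cycle, a contradiction. I would close by remarking that this recovers, independently of \cite{rietsch-williams} and \cite{chari}, the acyclicity needed to run discrete Morse theory on every Bruhat interval, finite or infinite.
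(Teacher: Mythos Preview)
Your overall strategy---find a quantity that moves strictly monotonically along any directed path in the modified Hasse diagram---is exactly the paper's strategy. But your step~(3) has the direction backwards, and the justification you give for it is not correct.

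Consider the interval $[s_2,\,s_2 s_1 s_3 s_2]$ from Example~\ref{e:interval_matching}. Take $x = s_2 s_3 s_2$ with $\s_x = [0\,X\,0\,1]$; here $\varphi$ acts at position $4$ (rule~(4)), so $j(\s_x)=4$, and the matched partner is $s_2 s_3$. The \emph{other} element covered by $x$ in the interval is $x' = s_3 s_2$ with $\s_{x'} = [X\,X\,0\,1]$. The new $X$ in $\s_{x'}$ relative to $\s_x$ sits at position~$1$, which is strictly to the \emph{left} of $j(\s_x)=4$, not to the right; and $j(\s_{x'}) = 3 < 4 = j(\s_x)$. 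So your claimed inequality $j(\s_{x'}) > j(\s_x)$ fails already here. The heuristic ``any other cover edge from $x$ uses an $X$ in a position that was passed over, hence lies further right'' is mistaken: positions to the right of $j(\s_x)$ are precisely the ones that are \emph{not} actionable (they carry mask-value~$1$ and are not shifted descents), so no $\varphi$-type move lives there. More fundamentally, an arbitrary Bruhat cover $x' \lessdot x$ need not correspond to changing a single entry of the mask---the unique constant mask for $x'$ may differ from that of $x$ in many positions---so one cannot simply locate ``the $X$ that is removed.''

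The paper tracks instead the position of the \emph{rightmost $X$-entry} at the lower-rank elements of an up--down pair $x \to z \to x'$, and shows it strictly \emph{decreases}. The mechanism is not a ``passed over'' argument but the following: after the up-step, the mask $\gamma$ for $z$ has all $1$'s to the right of position~$i$ (the former rightmost $X$) and no shifted descents there; if the rightmost $X$ in $\s'$ were at some $j \ge i$, then the greedy construction of Lemma~\ref{l:unique_constant_mask} forces $\s'$ and $\gamma$ to agree on all positions $>j$, and a Lifting Lemma computation then shows position~$j$ would have been a shifted descent in $\gamma$---contradicting the rightmost choice. Your proposal never invokes the shifted-descent condition or the Lifting Lemma, which is where the actual work lies.
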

\begin{proof}
Let $\w$ be a reduced expression for $w$ and consider the relative masks on $\w$
for $y$.  
Every directed cycle has at least two pairs of up-down edges.  Observe that each edge
pointing up corresponds to a matched edge, so is obtained by removing the
rightmost $X$ entry.  Each edge pointing down corresponds to a non-matched edge,
and this is a Bruhat cover on the elements encoded by the $X$-masks.

Recall that $\Xi(\s)$ denotes the $X$-mask of a relative mask $\s$ on $\w$.
Suppose we have a pair of up-down edges in a directed cycle
\[ x := \w^{\Xi(\s)} \rightarrow z := \w^{\Xi(\gamma)} \rightarrow x' :=
\w^{\Xi(\s')}. \]
Here, $z$ covers $x$ and $x'$ with $x \neq x'$, and $\gamma = \varphi(\s)$.  We claim that the
rightmost $X$-entry in $\s'$ occurs strictly left of the rightmost $X$-entry in
$\s$, which implies that there are no directed cycles.

Let $i$ denote the position in $\w$ where $\varphi$ acts on $\s$.  Since $i$ is
the rightmost move and $\varphi$ does not alter the mask-values to the right of
$i$, we must have $\gamma_j = 1$ for all $j > i$, and none of the positions
$\gamma_j$ are shifted descents for $j > i$.

Consider the rightmost $X$-entry in $\s'$ and suppose for the sake of
contradiction that it occurs in position $j \geq i$.  Then, the relative masks
$\gamma$ and $\s'$ agree on all positions strictly right of $j$ according to
the algorithm given in Lemma~\ref{l:unique_constant_mask} since we always
encode the same element $y$.  At position $j$, we have $\Xi(\gamma)[j] = 1$,
and $\Xi(\s')[j] = 0$.  Hence,
\begin{equation}\label{e:ac_sd}
    \w^{\gamma[j]} = \w^{\s'[j]} \leq \w^{\Xi(\s')[j-1]} = \w^{\Xi(\gamma)[j-1]}.
\end{equation}
To see the last equality, we use a Lifting Lemma argument.  We have
$\w^{\Xi(\s')[j-1]} = \w^{\Xi(\s')[j]} \leq \w^{\Xi(\gamma)[j]}$, and $\w_j$ is
a right descent for $\w^{\Xi(\gamma)[j]}$, but $\w_j$ is a right ascent for
$\w^{\Xi(\s')[j-1]}$.  So, $\w^{\Xi(\s')[j-1]} \leq \w^{\Xi(\gamma)[j]} \w_j =
\w^{\Xi(\gamma)[j-1]}$.  However, $\w^{\Xi(\s')[j-1]}$ and
$\w^{\Xi(\gamma)[j-1]}$ have the same length so they
must be equal.

Equation~(\ref{e:ac_sd}) proves that $j$ is a shifted descent in $\gamma$.  It
also shows that if $j = i$, then $x = x'$.  In any case, we reach a
contradiction.  Hence, the rightmost $X$-entry in $\s'$ occurs strictly left of
the rightmost $X$-entry in $\s$, so the matching is acyclic.
\end{proof}

This is consistent with the main result of \cite{bjorner-regcw}.


\bigskip
\section*{Acknowledgments}

We thank Drew Armstrong, Eric Babson, Francesco Brenti, Patricia Hersh, Sam
Hsiao, Nathan Reading, Konstanze Rietsch, Anne Schilling, and Monica Vazirani
for helpful conversations related to this work.  We especially thank Lauren
Williams for reading a draft of this paper and providing several excellent
suggestions.



\end{document}